\def\tm#1{\item[{\rm (#1)}]}
\def\css{\begin{cases}}
\def\ecss{\end{cases}}
\def\sin{{\rm{sin}}}
\def\cos{{\rm{cos}}}
\def\bull{\vrule height 1.2ex width 1ex depth -.1ex }
\renewcommand{\subsection}{\@startsection{subsection}{2}{0mm}{7mm}{4mm}
{\bf\normalsize}}
\def\nmrt{\begin{enumerate}}
\def\enmrt{\end{enumerate}}
\def\tm#1{\item[{\rm (#1)}]}
\newtheorem{formula}{}[section]
\newtheorem{remark}[formula]{Remark}
\newtheorem{lemma}[formula]{Lemma}
\newtheorem{theorem}[formula]{Theorem}
\newtheorem{example}[formula]{Example}
\begin{document}
\baselineskip=18pt
\date{}
\begin{center}
{\bf\Large A conjecture of Cameron and Kiyota on sharp characters with prescribed values}
\\ \vspace{1cm}
{\small A.~Abdollahi$^{a,c}$, J.~Bagherian$^a${\footnote{Corresponding author}}, M.~Khatami$^a$, Z. Shahbazi$^a$, R.~Sobhani$^{b}$} \\
 \vspace{-0.1cm}
{\small $^a$ Department of Pure Mathematics, Faculty of Mathematics and Statistics, University of Isfahan, Isfahan 81746-73441, Iran}\\
\vspace{-0.1cm}
{\small $^b$ Department of Applied Mathematics and Computer Science, Faculty of Mathematics and Statistics, University of Isfahan, Isfahan 81746-73441, Iran}\\
\vspace{-0.1cm}
{\small $^c$ School of Mathematics, Institute for Research in Fundamental Sciences (IPM), 19395-5746 Tehran, Iran}\\
{\small {\bf Emails:} a.abdollahi@sci.ui.ac.ir, bagherian@sci.ui.ac.ir, m.khatami@sci.ui.ac.ir, z.shahbazi@sci.ui.ac.ir, r.sobhani@sci.ui.ac.ir}
\vspace{1.5cm}
\end{center}

%\date{}

%\maketitle

\begin{abstract}
Let $ \chi $ be a virtual (generalized) character of a finite group $ G $ and $ L=L(\chi)$ be the image of $ \chi $ on $ G-\lbrace 1 \rbrace $.
The pair  $ (G, \chi) $ is said to be sharp of type $ L $ if $|G|=\prod _{ l \in L} (\chi(1) - l) $.
If the principal character of $G$ is not an irreducible constituent of $\chi$, the pair $(G,\chi)$ is called normalized.
In this paper, we first provide some counterexamples to a conjecture that was proposed by Cameron and Kiyota in $1988$. This  conjecture states that
if $(G,\chi)$ is sharp and $|L|\geq 2$, then the inner product
$(\chi,\chi)_G$ is uniquely determined by $ L $. We then prove that this conjecture is true in the case that $(G,\chi) $ is normalized, $\chi$ is a character of $ G $, and $ L $
contains at least an irrational value.\\

%\maketitle
\noindent \textbf{Keywords:}  Character value; Finite group; Sharp character; Sharp pair.
\noindent
\newline \textbf{MSC(2010):}
20C15.
\end{abstract}

\section{Introduction}
Let $ G $ be a finite group and $ \chi $ be a virtual (generalized) character of $ G $. We define
$$L(\chi):=\lbrace \chi(g)~~|~~1\neq g\in G \rbrace $$
and
$$ Sh(\chi) = \prod \limits_{ l \in L(\chi)} (\chi(1) - l). $$
It is known that $ Sh(\chi) $ is a multiple of $|G|$; see \cite{blich}.
The pair $ (G, \chi) $ (or briefly, the virtual character $ \chi $) is called sharp of type $ L $ or $ L $-sharp
if $ L=L(\chi) $ and $ Sh(\chi)=|G| $. Clearly,  $\chi$ is faithful whenever $(G,\chi)$ is $L$-sharp.
The pair $ (G, \chi) $ is said to be normalized if $ (\chi,1_{G})_{G}=0 $, where $1_G$ is the principal character of $G$.

The notion of sharpness was first introduced for permutation characters by Ito
and Kiyota  as a generalization of sharply $t$-transitive permutation representations \cite{ito}.
Then, Cameron and Kiyota extended the definition of sharp pairs to what is
given above and  posed the following  conjecture \cite{cam}.\\

\textbf{Conjecture 1}: If $ (G, \chi) $ is $ L $-sharp with $ |L|\geqslant 2 $, then $ (\chi, \chi)_{G} $ is uniquely determined by $ L $.\\

As a main result in this paper, we provide some counterexamples to the above conjecture.
In Section $2$, we give a group $G$ and two characters $\chi$ and
$\theta$ of $G$ such that the pairs $(G,\chi)$ and $(G,\theta)$ are $L$-sharp, but $(\chi,\chi)_G\neq (\theta, \theta)_G$ (see Example \ref{exm01}).
Moreover, we show that if $(G,\chi)$ in Conjecture $1$ is assumed to be normalized, then there are some counterexamples which show
 $(\chi,\chi)_G$ is not uniquely determined by $L$ (see Example \ref{exm02}).
It should be emphasized that we can not find any counterexample to Conjecture $1$ in the case that $(G,\chi)$ is
normalized and $\chi$ is a character. It seems that this conjecture is true whenever $ (G, \chi) $ is normalized and $\chi$ is a character.
Therefore, we give an improvement of Conjecture 1 as follows.\\

\textbf{Conjecture 2}: If $ (G, \chi) $ is $ L $-sharp and normalized where $\chi$ is a character and $ |L|\geqslant 2 $, then $ (\chi, \chi)_{G} $ is uniquely determined by $ L $.\\

In Section $3$, we show that Conjecture $2$ is true whenever $L$ contains at least an irrational value.
To do this, we use  a characterization of
$L$-sharp pairs, given in \cite{M1}, when $L$ contains at least an irrational value.

\section{$L$-Sharp characters with different numbers of irreducible constituents}
In this section, we first give a group $G$ with two characters $\chi$ and $\theta$ such that $(G,\chi)$ and $(G,\theta)$ are $L$-sharp and
$(\chi,\chi)_G\neq (\theta, \theta)_G$ (see Example \ref{exm01}). This shows that Conjecture $1$ is not true, in general.

\begin{example}\label{exm01}
{\rm Let $G$ be the group {\texttt{SmallGroup}(32,6)} in {\rm GAP }\cite{gap}.
Put $ \chi=\chi_{1}+2\chi_{2}+\chi_{5} $. Then as we can see in Table $1$, $ \chi $ is sharp of type $ L = \lbrace -1,3 \rbrace $ and
$ (\chi,\chi)_{G}=6 $. On the other hand, set $\theta=\chi_{2}+\chi_{3}+\chi_{4}+\chi_{5}$. Then $\theta$ is also sharp of type $L=\lbrace -1,3 \rbrace $ such that $ (\theta,\theta)_{G}=4 $. But we have $ (\chi,\chi)_{G} \neq (\theta,\theta)_{G} $.}
\begin{table}[!ht]
\centering
\begin{tabular}{c|crrccrrrrcr}
$ |Cl_{G}(g)| $ & $1$ & $4$ & $4~$ & $2~$ & $2$& $1$ & $4$ & $4$& $4~$ & $2$ & $4$ \\
\hline
$ \chi_{1} $ & $1$ & $1$ & $1~$ & $1~$ & $1$ & $ 1$ & $1$ & $1$ & $1~$ & $1$ & $1$\\
$ \chi_{2} $ & $1$ & $-1$ & $1~$ & $1~$ & $1$ & $1$ & $-1$ & $-1$ & $1~$ & $1$ & $-1$ \\
$ \chi_{3} $ & $1$ & $1$ & $-1~$ & $1~$ & $1$ & $ 1$ & $-1$ & $1$ & $-1~$ & $1$ & $-1$\\
$ \chi_{4} $ & $1$ & $-1$ & $-1~$ & $1~$ & $1$ & $1$ & $1$ & $-1$ & $-1~$ & $1$ & $1$\\
$ \chi_{5} $ & $4$ & $0$ & $0~$ & $0~$ & $0$ & $ -4$ & $0$ & $0$ & $0~$ & $0$ & $0$\\
\end{tabular}
\caption{Sharp characters $\chi=\chi_{1}+2\chi_{2}+\chi_{5} $ and $\theta=\chi_{2}+\chi_{3}+\chi_{4}+\chi_{5}$}
\end{table}
\end{example}

The example below shows that Conjecture $1$ is not true if we further assume that $(G,\chi)$ is normalized.
\begin{example}\label{exm02}
{\rm Let $G$ be the group {\texttt{SmallGroup}(192,1494)} in {\rm GAP }\cite{gap}.
Put $ \chi = \chi_{2}+\chi_{5} $. Then $\chi$ is sharp of type $ L = \lbrace -2, 0, 2 \rbrace $ and
$  (\chi, \chi)_{G} =2 $; see Table $2$. On the other hand, set $ \theta = \chi_{3}+\chi_{4}-\chi_{1} $. Then $\theta$ is also sharp of type $ L = \lbrace -2, 0, 2 \rbrace $ such that $ (\theta, \theta)_{G} = 3 $. Clearly,  $  (\chi, \chi)_{G} \neq (\theta, \theta)_{G}$.}
\begin{table}[!ht]
\centering
\begin{tabular}{c|rrrrrrrrrrrrr}
$ |Cl_{G}(g)| $ & $ 1 $ & $ 24 $ & $ 32 $ & $ 6 $ & $ 6 $ & $ 1 $ & $ 12 $ & $ 24 $ & $ 32 $ & $ 12~~ $ & $ 6 $ & $ 24 $ & $ 12 $ \\
\hline
$ \chi_{1} $ & $ 1 $ & $ -1 $ & $ 1 $ & $ 1 $ & $ 1 $ & $ 1 $ & $ -1 $ & $ -1 $ & $ 1 $ & $ 1 ~~$ & $ 1 $ & $ -1 $ & $ -1 $ \\
$ \chi_{2} $ & $ 2 $ & $ 0 $ & $ -1 $ & $ 2 $ & $ 2 $ & $ 2 $ & $ 0 $ & $ 0 $ & $ -1 $ & $ 2~~ $ & $ 2 $ & $ 0 $ & $ 0 $ \\
$ \chi_{3} $ & $ 3 $ & $ 1 $ & $ 0 $ & $ -1 $ & $ -1 $ & $ 3 $ & $ -1 $ & $ -1 $ & $ 0 $ & $ -1~~ $ & $ 3 $ & $1 $ & $ -1 $ \\
$ \chi_{4} $ & $ 4 $ & $ 0 $ & $ 1 $ & $ 0 $ & $ 0 $ & $ -4 $ & $ -2 $ & $ 0 $ & $ -1 $ & $ 0 ~~$ & $ 0 $ & $  0 $ & $ 2 $ \\
$ \chi_{5} $ & $ 4 $ & $ 0 $ & $ 1 $ & $ 0 $ & $ 0 $ & $ -4 $ & $ 2 $ & $ 0 $ & $ -1 $ & $ 0 ~~$ & $ 0 $ & $ 0 $ & $ -2 $
\end{tabular}
\caption{Sharp characters $ \chi = \chi_{2}+\chi_{5} $ and $ \theta = \chi_{3}+\chi_{4}-\chi_{1} $}
\end{table}
\end{example}

In the following, we give some remarks related to Conjecture $2$.
\begin{remark}
We can express a strong version of Conjecture $2$ as follows:\\

If $ (G, \chi) $ and $(H,\theta)$ are $ L $-sharp and normalized such that $\chi$ and $\theta$ are characters and $ |L|\geqslant 2 $, then
$ (\chi, \chi)_{G} =(\theta,\theta)_H$.\\

By Example \ref{rem12} below, we show that the above conjecture is not true, in general.
In fact, there exist non-isomorphic groups $ G $ and $ H $  with $L$-sharp
characters $ \chi $ and $ \theta $  with $ |L| \geq 2 $ such that $ (\chi, \chi)_{G} \neq(\theta, \theta)_{H} $.
Moreover, if we further assume that $|G|=|H|$, then there exists an counterexample to the above conjecture; see Example \ref{rem123}.
\end{remark}
\begin{example}\label{rem12}
{\rm Let $G$ be the alternating group $ A_{7} $ of degree $7$. Using the character table of $G$, one can check that $G$ has
an irreducible sharp character $\chi$ of type $ L = \lbrace-1, 0, 2 \rbrace $; see Table $3$.
On the other hand, let $H$ be the dihedral group $ D_{12} $ of order $12$.
Then $H$ has the sharp character $ \theta= \chi_{2}+\chi_{6} $  of type $ L = \lbrace-1, 0, 2 \rbrace $; see Table $4$.
Clearly, in this case $(\chi,\chi)_G\neq (\theta,\theta)_H$.}
\begin{table}[!ht]
\centering
\begin{tabular}{c|ccccccccc}
 $|Cl_{G}(g)|$ &  1  &  105 &  70  &  210  &  280  &  630  &  504  &  360  &  360  \\
\hline
 $\chi$  &  14  &  2  &  2  &  2  &  -1  &  0  &  -1  &  0  &  0
\end{tabular}
\caption{Sharp character $\chi$}
\end{table}
\begin{table}[!ht]
\centering
\begin{tabular}{c|rrrrrr}
$ |Cl_{G}(g)| $ & $ 1 $ & $ 1 $ & $ 3 $ & $ 3 $ & $ 2 $ & $ 2 $ \\
\hline
$ \chi_{1} $&$ 1 $ & $ 1 $ & $ 1 $ & $ 1 $ & $ 1 $ & $ 1 $ \\
$ \chi_{2} $&$ 1 $ & $ 1 $ & $ -1 $ & $ -1 $ & $ 1 $ & $ 1 $ \\
$ \chi_{3} $&$ 1 $ & $ -1 $ & $ -1 $ & $ 1 $ & $ 1 $ & $ -1 $ \\
$ \chi_{4} $&$ 1 $ & $ -1 $ & $ 1 $ & $ -1 $ & $ 1 $ & $ -1 $ \\
$ \chi_{5} $&$ 2 $ & $ 2 $ & $ 0 $ & $ 0 $ & $ -1 $ & $ -1 $ \\
$ \chi_{6} $&$ 2 $ & $ -2 $ & $ 0 $ & $ 0 $ & $ -1 $ & $ 1 $
\end{tabular}
\caption{Character table of $D_{12}$}
\end{table}
\end{example}

\begin{example}\label{rem123}
{\rm Let $G$ be the group {\texttt{SmallGroup}(192,955)} in {\rm GAP } \cite{gap}.
Then as we can see in Table $5$, $G$ has an irreducible sharp character $\chi$ of type $ L = \lbrace -2, 0, 2 \rbrace $.
\begin{table}[!ht]
\centering
\begin{tabular}{c|rrrrrrrrrrrrrr}
~ $ |Cl_{G}(g)| $ & $ 1 $ & $ 12 $ & $ 4 $ & $ 32 $ & $ 6 $ & $ 3 $ & $ 12 $ & $ 24 $ & $ 12 $ & $ 32 $ & $ 12 $ & $ 6 $ & $ 24 $ & $ 12 $ \\
\hline
$ \chi $&$ 6 $ & $ -2 $ & $ 0 $ & $ 0 $ & $ 2 $ & $ -2 $ & $ 0 $ & $ 0 $ & $ 2 $ & $ 0 $ & $ 0 $ & $-2 $ & $ 0 $ & $ 0 $
\end{tabular}
\caption{Sharp character $\chi$}
\end{table}
On the other hand, let $ H $ be the group {\texttt{SmallGroup}(192,1494)} in {\rm GAP } \cite{gap}.
Using the character table of $H$, $\theta = \theta_{1}+\theta_{2}$ is a sharp character of type $ L = \lbrace -2, 0, 2 \rbrace $ (see Table $6$).
Clearly, in this case $(\chi,\chi)_G\neq (\theta,\theta)_H$.}
\begin{table}[!ht]
\centering
\begin{tabular}{c|rrrrrrrrrrrrr}
~ $ |Cl_{H}(h)| $ & $~~~~1$ & $ 24 $ & $ 32 $ & $ 6 $ & $ 6 $ & $ 1 $ & $ 12 $ & $ 24 $ & $ 32 $ & $ 12 $ & $ 6 $ & $ 24 $ & $ 12 $ \\
\hline
$ ~~~~\theta_{1} $&$~~~~2$ & $ 0 $ & $ -1 $ & $ 2 $ & $ 2 $ & $ 2 $ & $ 0 $ & $ 0 $ & $ -1 $ & $ 2 $ & $ 2 $ & $0 $ & $ 0 $ \\
$ ~~~~\theta_{2} $&$~~~~4$ & $ 0 $ & $ 1 $ & $ 0 $ & $ 0 $ & $ -4 $ & $ 2 $ & $ 0 $ & $ -1 $ & $ 0 $ & $ 0 $ & $0 $ & $ -2 $
\end{tabular}
\caption{Sharp character $\theta = \theta_{1}+\theta_{2}$}
\end{table}
\end{example}

\section{Sharp characters with at least an irrational value }
In this section, we show that if $ (G, \chi) $ is $L$-sharp and normalized such that $\chi$ is a character and $L$
contains at least an irrational value, then Conjecture 2 holds. Our key tool is the following result given by  Alvis and  Nozawa in \cite{M1}.

For the remainder of this paper,
whenever $ (G, \chi) $ is sharp, we assume that $ \chi $ is a character of $ G $.\\

\begin{theorem}(\cite[Theorem 1.3]{M1})\label{b}
Suppose $ (G, \chi) $ is $L$-sharp and normalized such that $ L $ contains at least an irrational value. Then one of the following holds.
\nmrt
\tm{i} $ G $ is cyclic of order $ m $, and either $ m\geq 3 $ and $ \chi $ is linear, or else
$ m\geq 5 $ and $ \chi $ is the sum of two complex conjugate linear characters of $ G $.
\tm{ii} $ G $ is dihedral of order $ 2m $, where $ m\geq 5 $ is odd, and $ \chi $ is irreducible of degree $ 2 $.
\tm{iii}$ G $ is dihedral or generalized quaternion of order $ 2m $, where $ m\geq 8 $ is even, and $ \chi = \psi $ or $ \chi = \psi + \varepsilon $, where $ \psi $ is irreducible of degree $ 2 $ and $ \varepsilon $ is linear with cyclic kernel of order $ m $.
\tm{iv} $ G $ is isomorphic to the binary octahedral group and $ \chi $ is irreducible of degree $ 2 $.
\tm{v} $ G $ is isomorphic to $ SL(2, 5) $ and $ \chi $ is irreducible of degree $ 2 $.
\tm{vi}$ G $ is isomorphic to $ A_{5} $ and $ \chi $ is irreducible of degree $ 3 $.
\enmrt
\end{theorem}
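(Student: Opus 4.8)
The plan is to combine the defining sharp identity with the Galois action produced by the irrational value, and then to reduce matters to a short list of groups admitting small faithful characters. First I would record the fundamental identity valid for any sharp pair: since for each $g\neq 1$ the value $\chi(g)\in L$ annihilates one factor of $\prod_{l\in L}(\chi(g)-l)$, while $\prod_{l\in L}(\chi(1)-l)=|G|$ by sharpness, the pointwise product $\prod_{l\in L}(\chi-l)$ equals the regular character $\rho_G$. Writing $k=|L|$ and $\prod_{l\in L}(X-l)=\sum_{j=0}^{k}(-1)^{k-j}e_{k-j}X^{j}$, with $e_i$ the elementary symmetric functions of $L$ and $\chi^{j}$ the $j$-fold tensor power, this is an identity of genuine class functions. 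Pairing it with $1_G$ (using normalization $(\chi,1_G)_G=0$) and with $\chi$ (using $(\rho_G,\chi)_G=\chi(1)$) yields numerical relations tying together $\chi(1)$, the multiplicities of the trivial constituent in the tensor powers, and $(\chi,\chi)_G$. I would also use throughout that $\chi$ is faithful, so $G$ is recovered from its action via $\chi$.

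The second step converts the irrational value into structural information through Galois theory. Let $K=\mathbb{Q}(\chi)=\mathbb{Q}(L)$; by hypothesis $[K:\mathbb{Q}]>1$. For $\sigma\in\mathrm{Gal}(K/\mathbb{Q})$ the conjugate $\chi^{\sigma}$ is again a character, and a direct computation gives $Sh(\chi^{\sigma})=\prod_{l\in L}(\chi(1)-\sigma(l))=\sigma(Sh(\chi))=|G|$, so $(G,\chi^{\sigma})$ is $\sigma(L)$-sharp and normalized as well. Thus the entire Galois orbit of $\chi$ consists of sharp normalized characters, and the rational character obtained by summing over the orbit is strongly constrained. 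The essential local input is that if $\chi(g)$ is irrational then the eigenvalues of $g$, which are roots of unity summing to $\chi(g)$, cannot all be rational; this forces $\langle g\rangle$, and through faithfulness the whole group, into a narrow range of orders and types.

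The third and decisive step is to bound the degree $\chi(1)$ and fix the constituent structure. Here I would show that the sharp identity together with the irrationality forces $\chi$ either to be a sum of at most two linear characters, making $G$ abelian and hence cyclic by faithfulness, or to contain a faithful irreducible constituent of degree $2$ or $3$. This is the main obstacle: controlling $\chi(1)$ and $(\chi,\chi)_G$ simultaneously from the symmetric-function relations and from the value distribution $\sum_{g\neq 1}|\chi(g)|^{2}=|G|(\chi,\chi)_G-\chi(1)^{2}$, and ruling out larger degrees, is precisely where the irrational value must be exploited most sharply. Once small degree is secured, the finite subgroups of $\mathrm{GL}_2(\mathbb{C})$ (cyclic, dihedral, generalized quaternion, binary octahedral, and $SL(2,5)$) together with the relevant degree-$3$ possibility ($A_5$) supply the only candidate families.

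Finally I would run through each surviving family and impose sharpness, normalization, and irrationality explicitly. For $G$ cyclic the linear and conjugate-linear-pair subcases yield (i) with the order bounds $m\geq 3$ and $m\geq 5$; for dihedral groups of odd order the irreducible degree-$2$ character gives (ii) with $m\geq 5$; for dihedral and generalized quaternion groups of even order the characters $\psi$ and $\psi+\varepsilon$ give (iii) with $m\geq 8$; and the binary octahedral, $SL(2,5)$, and $A_5$ cases give (iv)--(vi). In each family, computing $L$ from the known character values and testing $|G|=\prod_{l\in L}(\chi(1)-l)$ at once verifies sharpness and excludes the families (such as $A_4$ or the binary tetrahedral group) whose candidate characters are either rational-valued or violate the sharp equality. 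The hard part is the degree reduction of the third step; the remainder is bookkeeping over explicit character tables.
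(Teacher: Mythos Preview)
The paper does not prove this theorem; it quotes it from Alvis and Nozawa \cite{M1} (as Theorem~1.3 there) and uses it as a black box for the remainder of Section~3. There is therefore no proof in the present paper to compare your attempt against.

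On the proposal itself: the overall shape is reasonable and matches the general strategy of such classifications. The identity $\prod_{l\in L}(\chi-l)=\rho_G$ is correct as an equality of class functions, and the Galois observation is sound; in fact $\sigma(L)=L$ for every $\sigma$, since $\sigma(\chi(g))=\chi(g^{k})$ for some $k$ coprime to $|G|$, so the elementary symmetric functions $e_i$ are already rational integers. The appeal to the classification of finite subgroups of $\mathrm{GL}_2(\mathbb{C})$ at the end is also the right endgame. However, the step you yourself flag as ``the main obstacle'' is exactly where all the content lies, and you have not indicated a mechanism for carrying it out. The inner-product relations you extract from the sharp identity do not by themselves bound $\chi(1)$ or $(\chi,\chi)_G$: they involve the unknown quantities $(\chi^{j},1_G)_G$ and $(\chi^{j},\chi)_G$ for all $j\leq |L|$, and the number of these unknowns grows with $|L|$, so no finite system of such equations forces a degree bound without further input. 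In the Alvis--Nozawa argument this reduction occupies the bulk of the paper and proceeds through a careful analysis of how $\prod_{l\in L}(X-l)$ factors over $\mathbb{Q}$ into cyclotomic-type pieces, together with case splits on the possible conductors and orbit structures in $L$. Without that or an equivalent device, your outline is a correct scaffold with its load-bearing step missing.
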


The next lemmas give some results about the sharp characters of cyclic groups, dihedral groups, and generalized quaternion groups
which we need in the sequel.

\begin{lemma}(\cite[Proposition 1.8 ]{cam})\label{c}
Let $ G $ be a cyclic group of order $ m $ and $ \lambda $ be a faithful linear character of $ G $. Then $ (G, \lambda) $ is sharp of type
$$ L = \lbrace \omega^{r}~~|~~1\leq r \leq m-1 \rbrace $$ where $ \omega = e^{2\pi i/m} $.
\end{lemma}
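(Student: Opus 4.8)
The plan is to compute $L(\lambda)$ and $Sh(\lambda)$ directly. Write $G=\langle g\rangle$ with $g$ of order $m$. Since $\lambda$ is a faithful (hence injective) linear character, $\lambda(g)$ has multiplicative order $m$, i.e. $\lambda(g)=\omega^{a}$ for some $a$ with $\gcd(a,m)=1$, where $\omega=e^{2\pi i/m}$. Then $\lambda(g^{k})=\omega^{ak}$ for $0\le k\le m-1$. First I would note that as $g^{k}$ runs over $G\setminus\{1\}$, that is $k=1,\dots,m-1$, the exponents $ak$ run over a complete set of nonzero residues modulo $m$ (because $a$ is invertible mod $m$), so the values $\omega^{ak}$ are pairwise distinct and
$$L(\lambda)=\{\,\omega^{r}\ :\ 1\le r\le m-1\,\},$$
a set of size $m-1$ that does not depend on the particular faithful $\lambda$ chosen. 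In particular $\lambda$ is faithful, as it must be for an $L$-sharp pair.

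Next I would evaluate, using $\lambda(1)=1$,
$$Sh(\lambda)=\prod_{r=1}^{m-1}\bigl(\lambda(1)-\omega^{r}\bigr)=\prod_{r=1}^{m-1}\bigl(1-\omega^{r}\bigr).$$
The key identity is the factorization $X^{m}-1=\prod_{r=0}^{m-1}(X-\omega^{r})$ in $\mathbb{C}[X]$; dividing by $X-1$ gives $1+X+\cdots+X^{m-1}=\prod_{r=1}^{m-1}(X-\omega^{r})$, and setting $X=1$ yields $\prod_{r=1}^{m-1}(1-\omega^{r})=m$. Hence $Sh(\lambda)=m=|G|$, which is precisely the condition for $(G,\lambda)$ to be sharp of type $L$.

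There is essentially no obstacle here: once the value set is identified, the computation of $Sh(\lambda)$ is the standard evaluation of the product $\prod_{r=1}^{m-1}(1-\omega^{r})$. The only point deserving a line of care is verifying that the $m-1$ listed values are genuinely distinct, so that the product defining $Sh$ has exactly $m-1$ factors; this is immediate from $\mathrm{ord}(\omega)=m$.
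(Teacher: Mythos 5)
Your proof is correct and complete. The paper itself offers no argument for this lemma --- it simply cites \cite[Proposition 1.8]{cam} --- so there is nothing to compare against directly; your computation (faithfulness forces $\lambda(g)$ to be a primitive $m$th root of unity, hence $L(\lambda)$ is the full set of nontrivial $m$th roots of unity, and $\prod_{r=1}^{m-1}(1-\omega^{r})=m$ by evaluating $1+X+\cdots+X^{m-1}=\prod_{r=1}^{m-1}(X-\omega^{r})$ at $X=1$) is the standard one and is exactly the identity that also underlies the paper's own proofs of Lemmas \ref{d} and \ref{g}, where the same product reappears in the form $\prod_{r=1}^{m-1}2\sin(\pi r/m)=m$.
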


\begin{lemma}\label{d}
Let $ G $ be a cyclic group of order $ m $,  where $m \geq 5 $ is odd,  and let $ \lambda $ be a faithful linear character of $ G $.
Then $ (G, \lambda+\overline{\lambda} ) $ is a sharp pair of type $ L = \lbrace \omega^{r}+\omega^{-r}~~|~~1\leq r \leq (m-1)/2 \rbrace $, where $ \omega = e^{2\pi i/m} $.
\end{lemma}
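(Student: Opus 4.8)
The plan is to verify the two defining conditions of an $L$-sharp pair directly: first identify $L(\chi)$ for $\chi=\lambda+\overline{\lambda}$, and then evaluate $Sh(\chi)$ and show it equals $|G|=m$. Fix a generator $g$ of $G$. Since any faithful linear character is a power $\lambda_0^{k}$ of a fixed faithful linear character with $\gcd(k,m)=1$, after replacing $g$ by a suitable power we may assume $\lambda(g)=\omega$. Then $\chi(1)=2$ and $\chi(g^{r})=\omega^{r}+\omega^{-r}$ for $1\le r\le m-1$, so $L(\chi)=\{\omega^{r}+\omega^{-r}\mid 1\le r\le m-1\}$. Because $\omega^{r}+\omega^{-r}=2\cos(2\pi r/m)$ and $\cos$ is injective on $[0,\pi]$, one has $\omega^{r}+\omega^{-r}=\omega^{s}+\omega^{-s}$ if and only if $r\equiv\pm s\pmod m$; since $m$ is odd this means the $(m-1)/2$ values coming from $r=1,\dots,(m-1)/2$ are pairwise distinct and exhaust $L(\chi)$, so $L(\chi)$ is precisely the set $L$ in the statement.

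Next I would compute $Sh(\chi)=\prod_{r=1}^{(m-1)/2}\bigl(2-\omega^{r}-\omega^{-r}\bigr)$. The key elementary identity is $2-\omega^{r}-\omega^{-r}=-\omega^{-r}(1-\omega^{r})^{2}$ (each factor is in fact $4\sin^{2}(\pi r/m)>0$, so nothing vanishes). Substituting and collecting the scalar factors gives
$$ Sh(\chi)=(-1)^{(m-1)/2}\,\omega^{-\sum_{r=1}^{(m-1)/2}r}\,\Bigl(\prod_{r=1}^{(m-1)/2}(1-\omega^{r})\Bigr)^{2}. $$
To evaluate the remaining product I would invoke Lemma \ref{c} (equivalently, $\prod_{r=1}^{m-1}(1-\omega^{r})=m$, the value at $1$ of $1+x+\dots+x^{m-1}$): split $\prod_{r=1}^{m-1}(1-\omega^{r})$ into the ranges $1\le r\le(m-1)/2$ and $(m+1)/2\le r\le m-1$, and in the second range substitute $r\mapsto m-r$ and use $1-\omega^{-s}=-\omega^{-s}(1-\omega^{s})$. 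This turns $m=\prod_{r=1}^{m-1}(1-\omega^{r})$ into exactly the expression $(-1)^{(m-1)/2}\,\omega^{-\sum_{r=1}^{(m-1)/2}r}\,\bigl(\prod_{r=1}^{(m-1)/2}(1-\omega^{r})\bigr)^{2}$ obtained above, and hence $Sh(\chi)=m=|G|$.

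Combining the two computations, $L(\chi)=L$ and $Sh(\chi)=|G|$, so $(G,\lambda+\overline{\lambda})$ is $L$-sharp. The only place calling for a little care is the last step, matching the sign $(-1)^{(m-1)/2}$ and the power of $\omega$ between the two products; but since both arise from applying the single identity $1-\omega^{-s}=-\omega^{-s}(1-\omega^{s})$ to the reflected indices, these factors are literally identical on the two sides and cancel with no case analysis. Everything else is a routine trigonometric/root-of-unity verification, so I do not expect any genuine obstacle here.
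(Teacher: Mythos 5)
Your proposal is correct and follows essentially the same route as the paper: both reduce $Sh(\chi)=\prod_{r=1}^{(m-1)/2}\bigl(2-\omega^{r}-\omega^{-r}\bigr)$ to the full product over $r=1,\dots,m-1$ via the reflection $r\mapsto m-r$ and then invoke the identity whose value is $m$ (you use $\prod_{r=1}^{m-1}(1-\omega^{r})=m$ where the paper uses the equivalent $\prod_{r=1}^{m-1}2\sin(\pi r/m)=m$). A minor bonus of your write-up is that you verify the $(m-1)/2$ values $\omega^{r}+\omega^{-r}$ are pairwise distinct, which is actually needed for the product over the set $L$ to have the stated number of factors and which the paper's proof leaves implicit.
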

\begin{proof}
Set $ \chi = \lambda+\overline{\lambda} $.
First we show that $ \chi $ is faithful. Let $ L = \lbrace \chi(g)~~|~~1\neq g\in G \rbrace $. Then $L= \lbrace 2 ~\cos(2\pi r/m)~~|~~1\leq r \leq (m-1)/2 \rbrace $. If there exists $ 1\leq r \leq (m-1)/2 $ such that $ 2 ~\cos(2\pi r/m) = 2 $,
then $ 2\pi r/m = 2k\pi $ for some $ k\in \mathbb{Z} $, a contradiction. %which contradicts  $ r\in \lbrace 1,\ldots,(m-1)/2 \rbrace $.
Hence $ \chi $ is faithful. Moreover,
\begin{align*}
\prod \limits_ { r=1 }^{ (m-1)/2} \Big( 2 - 2~\cos\Big(\frac{2\pi r}{m}\Big)\Big) &=\prod \limits_ { r=1 }^{ (m-1)/2} 4~\sin^ 2 \Big({\frac{\pi r}{m}}\Big)  \\
&= \prod \limits_{ r=1 }^{ (m-1)/2} 2~\sin\Big(\frac{\pi r}{m}\Big) \prod \limits_{ r=1 }^{ (m-1)/2} 2~\sin\Big(\frac{\pi r}{m}\Big)  \\
&=  \prod \limits_{ r=1 }^{ (m-1)/2} 2~\sin\Big(\frac{\pi r}{m}\Big) \prod \limits_{ r=1 }^{ (m-1)/2} 2~\sin\Big(\frac{(m-r)\pi }{m}\Big)\\
&=\prod \limits_{ r=1 }^{ (m-1)/2} 2~\sin\Big(\frac{\pi r}{m}\Big) \prod \limits_{ j=(m+1)/2 }^{ m-1} 2~\sin\Big(\frac{\pi j }{m}\Big)\\
&=\prod \limits_{ r=1 }^{m-1} 2~\sin\Big(\frac{\pi r}{m}\Big) = m,
\end{align*}
(See \cite[Lemma 3.1]{MT}). This shows that $ \chi $ is a sharp character, as desired. \hfill\bull
\end{proof}

\begin{lemma}\label{f}
Let $ D_{2m}$ be the dihedral group of order $ 2m $, where $ m $ is odd, and let $ \chi $ be a faithful
irreducible character of $D_{2m} $ of degree $ 2 $. Then $ (D_{2m}, \chi) $ is a sharp pair of
type $ L = \lbrace 0,~\omega^{r}+ \omega^{-r}~~|~~1\leq r \leq (m-1)/2 \rbrace $, where $ \omega = e^{2\pi i/m} $.
\end{lemma}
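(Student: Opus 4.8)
The plan is to use the explicit description of the degree-$2$ irreducible characters of $D_{2m}$. Write $D_{2m}=\langle r,s\mid r^{m}=s^{2}=1,\ srs^{-1}=r^{-1}\rangle$. For each $j$ with $1\le j\le (m-1)/2$ there is an irreducible character $\chi_{j}$ of degree $2$ determined by $\chi_{j}(r^{k})=\omega^{jk}+\omega^{-jk}=2\cos(2\pi jk/m)$ and $\chi_{j}(sr^{k})=0$; together with the two linear characters these exhaust $\Irr(D_{2m})$. Since $\chi_{j}$ is faithful exactly when $\gcd(j,m)=1$, the hypothesis lets me assume $\chi=\chi_{j}$ with $\gcd(j,m)=1$.

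First I would determine $L=L(\chi)$. On the $m-1$ non-trivial rotations the values are $\omega^{jk}+\omega^{-jk}$ for $1\le k\le m-1$; as $\gcd(j,m)=1$, the residue $jk\bmod m$ runs over $1,\dots,m-1$ as $k$ does, so these values form $\{\,\omega^{t}+\omega^{-t}\mid 1\le t\le m-1\,\}$. The identity $\omega^{t}+\omega^{-t}=\omega^{m-t}+\omega^{-(m-t)}$ collapses this to $\{\,\omega^{t}+\omega^{-t}\mid 1\le t\le (m-1)/2\,\}$. On every reflection $sr^{k}$ the value is $0$, and because $m$ is odd one checks that $0\neq 2\cos(2\pi t/m)$ for $1\le t\le (m-1)/2$: equality would force $m\mid 4t$, hence $m\mid t$, which is impossible in that range. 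Therefore $L=\{0\}\cup\{\,\omega^{t}+\omega^{-t}\mid 1\le t\le (m-1)/2\,\}$, exactly as claimed, and $\chi$ is faithful by hypothesis.

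Then I would compute $Sh(\chi)$. Since $\chi(1)=2$,
$$ Sh(\chi)=(2-0)\prod_{t=1}^{(m-1)/2}\bigl(2-(\omega^{t}+\omega^{-t})\bigr)=2\prod_{t=1}^{(m-1)/2}\Bigl(2-2\cos\frac{2\pi t}{m}\Bigr). $$
The product over $t$ equals $m$: this is precisely the trigonometric computation carried out in the proof of Lemma \ref{d} (equivalently, \cite[Lemma 3.1]{MT}). Hence $Sh(\chi)=2m=|D_{2m}|$, so $(D_{2m},\chi)$ is sharp of type $L$.

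The argument is mostly bookkeeping; the only step demanding care is pinning down $L(\chi)$ — verifying that the rotation values really exhaust $\{\,\omega^{t}+\omega^{-t}\mid 1\le t\le (m-1)/2\,\}$ (using $\gcd(j,m)=1$ together with the symmetry $t\mapsto m-t$) and that $0$ is a genuinely new value, which is where the oddness of $m$ enters. Once $L(\chi)$ is identified, sharpness is immediate from the product formula already established for Lemma \ref{d}.
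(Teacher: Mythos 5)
Your proof is correct. The paper, however, does not argue the lemma at all: its entire proof is a citation, ``See the proof of Theorem 3.2 in \cite{MT}.'' What you have written is a complete, self-contained substitute: you use the explicit character table of $D_{2m}$ for $m$ odd, reduce to $\chi=\chi_j$ with $\gcd(j,m)=1$ via faithfulness, identify $L(\chi)$ exactly (the permutation of residues by multiplication by $j$, the folding $t\mapsto m-t$, and the observation that $0$ is not among the rotation values because $m$ odd and $m\mid 4t$ would force $m\mid t$), and then evaluate $Sh(\chi)=2\prod_{t=1}^{(m-1)/2}\bigl(2-2\cos(2\pi t/m)\bigr)=2m$ by the same sine-product identity the paper itself invokes in Lemma \ref{d} and again in Lemma \ref{g}. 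This is in the spirit of the surrounding lemmas and arguably an improvement on the paper's treatment, since it makes the section self-contained rather than deferring to an external source. The only step you leave tacit is that the values $\omega^{t}+\omega^{-t}$ for $1\le t\le (m-1)/2$ are pairwise distinct (so that the product over the \emph{set} $L$ really is the product over $t$); this is immediate since $\cos$ is injective on $(0,\pi)$, and the paper glosses over the same point in Lemma \ref{d}, so it is not a gap worth worrying about.
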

\begin{proof}
See the proof of Theorem 3.2 in \cite{MT}. \hfill\bull
\end{proof}
\begin{lemma}\label{g}
Let $ G $ be a dihedral or generalized quaternion group of order $ 2m $ where  $  m $ is even and let $\omega = e^{2\pi i/m} $. Then
\nmrt
\tm{1}\label{g_{1}} if $ m/4 \in \mathbb{Z} $ and $ \psi $ is a faithful irreducible character of degree $ 2 $, then $ (G, \psi) $ is a sharp pair of type $ L = \lbrace -2,~ 0, ~ \omega^{r}+ \omega^{-r}~~|~~1\leq r \leq m/2-1 \rbrace $; \\
\tm{2}\label{g_{2}} if $ \psi $ is a faithful irreducible character of degree $ 2 $ and $ \varepsilon $ is linear with cyclic kernel of order $ m $,
then $(G, \psi+\varepsilon) $ is a sharp pair of type $$ L = \lbrace -1, ~ 1+\omega^{r}+ \omega^{-r}~~|~~1\leq r \leq m/2-1 \rbrace .$$
\enmrt
\end{lemma}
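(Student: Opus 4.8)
The plan is to handle the dihedral and generalized quaternion cases at once, since the only structural facts used are that $G$ has a cyclic normal subgroup $C=\langle a\rangle$ of order $m$ and index $2$ (in case (2) we take $C=\ker\varepsilon$), that every element of $G\setminus C$ inverts $C$, and that each irreducible character of $G$ of degree $2$ is induced from $C$ and hence vanishes on $G\setminus C$. Fix $b\in G\setminus C$ and put $\omega=e^{2\pi i/m}$.

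First I would record the character values and read off $L$. If $\psi$ is a faithful irreducible character of degree $2$, then $\psi=\mathrm{Ind}_C^G(\mu)$ for a linear character $\mu$ of $C$, so $\psi|_C=\mu+\overline{\mu}$ and $\psi$ vanishes on $Cb$; faithfulness forces $\mu$ to be faithful, i.e.\ $\mu(a)=\omega^{j}$ with $(j,m)=1$, whence $\psi(a^{k})=\omega^{jk}+\omega^{-jk}$ and $\psi(a^{k}b)=0$. For the linear $\varepsilon$ with $\ker\varepsilon=C$ we have $\varepsilon(a^{k})=1$ and $\varepsilon(a^{k}b)=-1$. Since $(j,m)=1$, the map $k\mapsto jk$ permutes $\{1,\dots,m-1\}$ modulo $m$, and $\omega^{r}+\omega^{-r}=2\cos(2\pi r/m)$ depends only on $\min(r,m-r)$; hence $\{\psi(a^{k})\mid 1\le k\le m-1\}=\{-2\}\cup\{\omega^{r}+\omega^{-r}\mid 1\le r\le m/2-1\}$, the value $-2$ coming from $r=m/2$. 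In case (1) one adjoins $\psi(b)=0$, which by $m/4\in\mathbb{Z}$ equals $\omega^{m/4}+\omega^{-m/4}$ and is already present, giving $L(\psi)=\{-2,0\}\cup\{\omega^{r}+\omega^{-r}\mid 1\le r\le m/2-1\}$ as claimed. In case (2), $\chi=\psi+\varepsilon$ has $\chi(1)=3$, $\chi(a^{k})=1+\omega^{jk}+\omega^{-jk}$ and $\chi(a^{k}b)=-1$; the term with $jk\equiv m/2\pmod m$ equals $1+(-2)=-1$, and $-1\ne 1+\omega^{r}+\omega^{-r}$ for $1\le r\le m/2-1$, so $L(\chi)=\{-1\}\cup\{1+\omega^{r}+\omega^{-r}\mid 1\le r\le m/2-1\}$. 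Faithfulness of $\chi$ is automatic because $\ker\chi\subseteq\ker\psi=1$.

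It remains to evaluate $Sh$. Using $2-(\omega^{r}+\omega^{-r})=4\sin^{2}(\pi r/m)$ together with the identity $\prod_{r=1}^{m-1}2\sin(\pi r/m)=m$ already invoked in Lemma \ref{d} (from \cite[Lemma 3.1]{MT}), splitting at $r=m/2$ (where $2\sin(\pi/2)=2$) and pairing $r$ with $m-r$ gives $\bigl(\prod_{r=1}^{m/2-1}2\sin(\pi r/m)\bigr)^{2}=m/2$, hence
\[
\prod_{r=1}^{m/2-1}\bigl(2-\omega^{r}-\omega^{-r}\bigr)=\frac{m}{2}.
\]
Therefore $Sh(\psi)=\bigl(2-(-2)\bigr)\prod_{r=1}^{m/2-1}\bigl(2-\omega^{r}-\omega^{-r}\bigr)=4\cdot\frac m2=2m=|G|$ in case (1), and, since $3-(1+\omega^{r}+\omega^{-r})=2-\omega^{r}-\omega^{-r}$, also $Sh(\chi)=\bigl(3-(-1)\bigr)\prod_{r=1}^{m/2-1}\bigl(2-\omega^{r}-\omega^{-r}\bigr)=4\cdot\frac m2=2m=|G|$ in case (2).

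The trigonometry is routine; the delicate point is the exact index set of $L$. The hypothesis $m/4\in\mathbb{Z}$ in case (1) is used precisely to guarantee that $0$ is not a new value — otherwise a spurious factor $2=2-0$ would appear and give $Sh(\psi)=4m$ — while in case (2) one must notice that $-1$ is attained both on $a^{m/2}$ and on the coset $Cb$ yet contributes only the single factor $4=3-(-1)$. The one genuine case-distinction I expect is verifying that in the generalized quaternion group the degree-$2$ character really does vanish on all of $G\setminus C$, which is immediate from its being induced from the normal subgroup $C$; this is what makes the uniform treatment possible.
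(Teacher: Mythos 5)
Your proposal is correct and follows essentially the same route as the paper: determine the values of $\psi$ (resp.\ $\psi+\varepsilon$) on the cyclic subgroup $\langle a\rangle$ and on its complement, observe that the hypothesis $m/4\in\mathbb{Z}$ makes the value $0$ coincide with $\omega^{m/4}+\omega^{-m/4}$ so no extra factor appears, and evaluate $\prod_{r=1}^{m/2-1}(2-\omega^{r}-\omega^{-r})=m/2$ via the identity $\prod_{r=1}^{m-1}2\sin(\pi r/m)=m$. The only differences are presentational: you obtain the character values uniformly by inducing from $\langle a\rangle$ instead of quoting the three explicit character tables of $D_{2m}$ and $Q_{4t}$, and you actually carry out the computation for part (2), which the paper dismisses with ``similarly.''
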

\begin{proof}
Assume that $m=2t$ for some integer $ t $.  If $ G =D_{2m}$, then
$$ G= \langle~ a, ~b~~:~~a^{m} = b^{2} = 1,~~b^{-1}ab = a^{-1}~\rangle $$ and
it is known that Table $ 7 $ is the character table of $ G $ where
\begin{center}
$ \lbrace 1 \rbrace,~ \lbrace a^{t} \rbrace,~ \lbrace a^{r}, a^{-r} \rbrace $~for $~1\leq r \leq t-1~ , \lbrace a^{s}b~\vert ~ s~ even \rbrace, \lbrace a^{s}b~\vert ~ s~ odd \rbrace $
\end{center}
are the conjugacy classes of $ G $.
\begin{table}[!ht]\label{T2}
\centering
\begin{tabular}{||c|cccrr||}
\hline
$ Cl_G(g) $ & $ 1 $ & $ a^{t} $ & $ a^{r}(1\leq r \leq t-1) $ & $ b $ & $ ab $  \\
$ |Cl_G(g)|$ & $ 1 $ & $ 1 $ & $ 2 $ & $ m/2 $ & $ m/2 $\\
\hline
$ \chi_{1} $ & $ 1 $ & $ 1 $ & $1$ & $ 1 $ & $ 1 $ \\
$ \chi_{2} $ & $ 1 $ & $ 1 $ & $1$ & $ -1 $ & $ -1 $ \\
$ \chi_{3} $ & $ 1 $ & $ (-1)^{t} $ & $ (-1)^{r} $ & $ 1 $ & $  -1$\\
$ \chi_{4} $ & $ 1 $ & $ (-1)^{t} $ & $ (-1)^{r} $ & $ -1 $ & $ 1 $\\
$ \psi_{j} $ & $ 2 $  & $ 2(-1)^{j} $ & $ \omega^{jr}+\omega^{-jr} $ & $ 0 $ & $ 0 $\\
$ (1\leq j \leq t-1) $&&&&&\\
\hline
\end{tabular}
\caption{Character table of $ D_{2m} $ $( m $ even $ ) $}
\end{table}
Similarly, if  $ G = Q_{2m} $ is the generalized quaternion group of order $ {2m} $, then
$$ G = \langle~ a,~b~~:~~a^{2t}=1,~~~a^{t} = b^{2} ,~~~b^{-1}ab = a^{-1}~\rangle .$$ \\
In this case
$$ \lbrace 1 \rbrace,~ \lbrace a^{t} \rbrace,~ \lbrace a^{r}, a^{-r}\rbrace $$ for $ ~1\leq r \leq t-1$, and  $$~\lbrace a^{2r}b~\vert ~ 0\leq r\leq t-1 \rbrace, ~\lbrace a^{2r+1}b~\vert ~0\leq r\leq t-1 \rbrace $$
are the conjugacy classes of $ G $ and the character tables of the groups
$Q_{4t}$ ($t$ odd) and  $Q_{4t}$ ($t$ even) are given
in Table $8$ and Table $9$, respectively; (see \cite{M}).
Now suppose that $G$ is one of the two groups $D_{2m}$ or $ Q_{2m} $,  $ ( m $ even$ ) $.
We show that the irreducible character $ \psi_{1} $ is a sharp character of $ G $ of
type $$ L = \lbrace -2, ~0, ~ \omega^{r}+ \omega^{-r}~~|~~1\leq r \leq m/2-1 \rbrace .$$
It is easy to see that $ \psi_{1} $ is faithful and we have
\begin{align*}
\prod \limits_ { r=1 }^{ m/2-1} \Big( 2 - 2~\cos\Big(\frac{2\pi r}{m}\Big)\Big) & = \prod \limits_ { r=1 }^{ m/2-1 } 4~\sin^ 2 \Big({\frac{\pi r}{m}}\Big) \\
&= \prod \limits_{ r=1 }^{ m/2-1} 2~\sin\Big(\frac{\pi r}{m}\Big) \prod \limits_{ r=1 }^{ m/2-1} 2~\sin\Big(\frac{\pi r}{m}\Big)\\
&= \prod \limits_{ r=1 }^{ m/2-1} 2~\sin\Big(\frac{\pi r}{m}\Big) \prod \limits_{ r=1 }^{ m/2-1} 2~\sin\Big(\frac{(m-r)\pi }{m}\Big)\\
&=\prod \limits_{ r=1 }^{m/2-1 } 2~\sin\Big(\frac{\pi r}{m}\Big) \prod \limits_{ j=m/2+1 }^{ m-1} 2~\sin\Big(\frac{\pi j }{m}\Big) \\
&=\dfrac{\prod \limits_{ r=1 }^{m-1} 2~\sin\Big(\frac{\pi r}{m}\Big)}{{2~\sin(\dfrac { \pi}{2}})} = \dfrac{m}{2},
\end{align*}
(See \cite[Lemma 3.1]{MT}). Since for $ r=m/4 $, we have $ 2~\cos(\frac{2\pi r}{m}) = 0 $ it follows that
$ \prod _{ l \in L(\psi_{1})} (2 - l) = 2m $ and hence $ \psi_{1} $ is a sharp character of $G$ of type
$$ L = \lbrace -2, ~0, ~ \omega^{r}+ \omega^{-r}~~|~~1\leq r \leq m/2-1 \rbrace .$$
\begin{table}[!ht] \label{T3}
\centering
\begin{tabular}{||c|crcrr||}
\hline
$ Cl_G(g) $ & $ 1 $ & $ a^{t} $ & $ a^{r}(1\leq r \leq t-1) $ & $ b $ & $ ab $  \\
$ |Cl_G(g)| $ & $ 1 $ & $ 1 $ & $ 2 $ & $ m/2 $ & $ m/2 $\\
\hline
$ \chi_{1} $ & $ 1 $ & $ 1 $ & $ 1 $ & $ 1 $ & $ 1 $ \\
$ \chi_{2} $ & $ 1 $ & $ 1 $ & $ 1 $ & $ -1 $ & $ -1 $ \\
$ \chi_{3} $ & $ 1 $ & $ -1 $ & $ (-1)^{r} $ & $ i $ & $  -i $\\
$ \chi_{4} $ & $ 1 $ & $ -1 $ & $ (-1)^{r} $ & $ -i $ & $ i $\\
$ \psi_{j} $ & $ 2 $  & $ 2(-1)^{j} $ & $ \omega^{jr}+\omega^{-jr} $ & $ 0 $ & $ 0 $\\
$ (1\leq j \leq t-1) $&&&&&\\
\hline
\end{tabular}
\caption{Character table of $ Q_{4t} $ $( t $ odd $ ) $}
\end{table}
\begin{table}[!ht]\label{T4}
\centering
\begin{tabular}{||c|cccrr||}
\hline
$ Cl_G(g) $ & $ 1 $ & $ a^{t} $ & $ a^{r}(1\leq r \leq t-1) $ & $ b $ & $ ab $  \\
$ |Cl_G(g)| $ & $ 1 $ & $ 1 $ & $ 2 $ & $ m/2 $ & $ m/2 $\\
\hline
$ \chi_{1} $ & $ 1 $ & $ 1 $ & $ 1 $ & $ 1 $ & $ 1 $ \\
$ \chi_{2} $ & $ 1 $ & $ 1 $ & $ 1 $ & $ -1 $ & $ -1 $ \\
$ \chi_{3} $ & $ 1 $ & $ 1 $ & $ (-1)^{r} $ & $ 1$ & $ -1 $\\
$ \chi_{4} $ & $ 1 $ & $ 1 $ & $ (-1)^{r} $ & $ -1 $ & $ 1 $\\
$ \psi_{j} $ & $ 2 $  & $ 2(-1)^{j} $ & $ \omega^{jr}+\omega^{-jr} $ & $ 0 $ & $ 0 $\\
$ (1\leq j \leq t-1) $&&&&&\\
\hline
\end{tabular}
\caption{Character table of $ Q_{4t} $ $( t $ even $ ) $}
\end{table}

Now we consider irreducible character $ \psi_{j} $ where $ 2\leq j \leq t-1 $. Then one can see that
$ \psi_{j} $ is faithful if and only if $ j $ is odd and $ ( j, m ) = 1 $. In this case we have $ L(\psi_{1}) = L(\psi_{j}) $ and
the argument above shows that $\psi_j$ is a sharp character for $G$
of type $ L = \lbrace -2, ~0, ~ \omega^{r}+ \omega^{-r}~~|~~1\leq r \leq m/2-1 \rbrace $.

Similarly,  we can show that $ \chi_{2}+ \psi_{j} $, where $ 1\leq j \leq t-1 $ is odd and $ (j, m) = 1 $, is a sharp character
of type $ L = \lbrace -1, ~ 1+\omega^{r}+ \omega^{-r}~~|~~1\leq r \leq m/2-1 \rbrace $ for $G$. \hfill\bull
\end{proof}

\vspace{1cm}
For the remainder of this paper we put $ \omega = e^{2\pi i/m}$ and we assume that
\nmrt
\tm{a} $ L_{1} = \lbrace \omega^{r}~~|~~1 \leq r \leq m-1 \rbrace $ such that $ m\geq 3 $;\\
\tm{b} $ L_{2} = \lbrace \omega^{r}+\omega^{-r}~~|~~1\leq r \leq (m-1)/2 \rbrace $ such that $ m\geq 5 $ is odd;\\
\tm{c} $ L_{3} = \lbrace 0, ~ \omega^{r}+ \omega^{-r}~~|~~1\leq r \leq (m-1)/2 \rbrace $ such that $ m\geq 5 $ is odd;\\
\tm{d}$ L_{4} = \lbrace -2,~ 0, ~ \omega^{r}+ \omega^{-r}~~|~~1\leq r \leq m/2-1 \rbrace $ such that $ m \geqslant 8 $ is even; \\
\tm{e}   $ L_{5} = \lbrace -1, ~ 1+\omega^{r}+ \omega^{-r}~~|~~1\leq r \leq m/2-1 \rbrace $ such that $ m \geqslant 8 $ is even;\\
\tm{f}$ L_{6} = \lbrace -2,-1,0,1,\sqrt{2},-\sqrt{2} \rbrace $;\\
\tm{g} $ L_{7} = \lbrace -2,-1,0,1,\dfrac{1+\sqrt{5}}{2},\dfrac{1-\sqrt{5}}{2},\dfrac{-1+\sqrt{5}}{2},\dfrac{-1-\sqrt{5}}{2} \rbrace $;
\\
\tm{h} $ L_{8} = \lbrace -1,0,\dfrac{1+\sqrt{5}}{2},\dfrac{1-\sqrt{5}}{2} \rbrace $.\\
\enmrt
Then we can see that $ L_{i} \cap \mathbb{Z} \subseteq \lbrace -2, -1, 0, 1, 2 \rbrace $ for $ 1 \leq i \leq 5 $.
In the next lemmas for cyclic groups, dihedral groups, and  generalized quaternion groups we determine
$L_{i} \cap \mathbb{Z} $ whenever $ (G, \chi) $ is $ L_i $-sharp.

\begin{lemma}\label{j}
Let $ G $ be a cyclic group of order $ m $ and $ (G, \chi) $ be $ L $-sharp with $ L = L_{1} $ or $ L = L_{2} $, then $ L \cap \mathbb{Z} \subseteq \lbrace -1 \rbrace $.
\end{lemma}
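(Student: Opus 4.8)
The plan is to notice that the statement is purely arithmetic: it asserts something about the sets $L_1$ and $L_2$ as subsets of $\mathbb{C}$, and neither $\chi$ nor the sharpness hypothesis is actually needed for the conclusion (the sharpness merely fixes the context, since by Theorem~\ref{b}(i) together with Lemmas~\ref{c} and~\ref{d} a cyclic $G$ of order $m$ carrying an $L$-sharp character has $L=L_1$ or $L=L_2$). So I would prove directly that $L_1\cap\mathbb{Z}$ and $L_2\cap\mathbb{Z}$ are each contained in $\{-1\}$. By the observation recorded just above the lemma, $L_i\cap\mathbb{Z}\subseteq\{-2,-1,0,1,2\}$ for $1\le i\le 5$; hence it suffices to rule out $-2,0,1,2$ from $L_1$ and from $L_2$.

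For $L=L_1=\{\,\omega^{r}\mid 1\le r\le m-1\,\}$ this is immediate: each element is an $m$-th root of unity and so has modulus $1$, which excludes $-2,0,2$; and $1\notin L_1$ since $\omega^{r}=1$ would force $m\mid r$, impossible for $1\le r\le m-1$. Thus $L_1\cap\mathbb{Z}\subseteq\{-1\}$, with $-1=\omega^{m/2}$ actually occurring exactly when $m$ is even.

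For $L=L_2=\{\,\omega^{r}+\omega^{-r}\mid 1\le r\le (m-1)/2\,\}$ with $m\ge 5$ odd, I would write the elements as $2\cos(2\pi r/m)$ and exploit the range of $r$: since $1\le r\le (m-1)/2$ one has $0<2\pi r/m<\pi$, so $\cos(2\pi r/m)\in(-1,1)$ and hence $2\cos(2\pi r/m)\in(-2,2)$, which already removes $\pm 2$. It then remains to show $2\cos(2\pi r/m)\ne 0$ and $2\cos(2\pi r/m)\ne 1$. But $2\cos(2\pi r/m)=0$ forces $2\pi r/m=\pi/2$, that is $m=4r$, and $2\cos(2\pi r/m)=1$ forces $2\pi r/m=\pi/3$ (the unique value in $(0,\pi)$ whose cosine is $1/2$), that is $m=6r$; both contradict $m$ being odd. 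Therefore $L_2\cap\mathbb{Z}\subseteq\{-1\}$ as well (and $-1$ genuinely occurs, namely when $3\mid m$, via $r=m/3$, which lies in the allowed range because $m\ge 3$).

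The argument involves no genuine obstacle; the only points deserving attention are that the prescribed range $1\le r\le(m-1)/2$ keeps $2\cos(2\pi r/m)$ strictly between $-2$ and $2$ for $L_2$, and that the oddness of $m$ is precisely what eliminates the two remaining integer candidates $0$ and $1$.
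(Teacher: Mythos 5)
Your proposal is correct and follows essentially the same route as the paper's proof: a direct arithmetic check that the integers $-2,0,1,2$ cannot occur in $L_1$ or $L_2$, using the modulus-one property of roots of unity for $L_1$ and the oddness of $m$ for $L_2$. The only difference is that you spell out the cosine computation where the paper simply writes ``it is easy to see,'' so your version is a fleshed-out form of the same argument.
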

\begin{proof}
First let $ L = L_{1} $. Since $ L_{1} $ is the set of $ m $th roots of unity (distinct from 1), it is
clear that $ L_{1} \cap \mathbb{Z} \subseteq \lbrace -1 \rbrace $.\\
Now suppose that $ L = L_{2} $. Since $ m $ is odd, it is easy to see that $ -2, 0, 1, 2 \notin L_{2} $. Moreover,
$ \omega^{r}+\omega^{-r} = -1 $ if and only if $ m/3 \in \mathbb{Z} $. So
$ L_{2} \cap \mathbb{Z} \subseteq \lbrace -1 \rbrace $. \hfill\bull
\end{proof}

\begin{lemma}\label{k}
Let $ G $ be a dihedral group of order $ 2m $, $ ( m\geq 5 $ odd $ ) $ and $ (G, \chi) $ be $ L $-sharp
with $ L = L_{3} $, then $ \lbrace 0 \rbrace \subseteq L \cap \mathbb{Z} \subseteq \lbrace -1,0 \rbrace $.
\end{lemma}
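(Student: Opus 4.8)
The plan is to reduce the claim to the cyclic case already settled in Lemma \ref{j}. The key observation is that, for $m \geq 5$ odd, the type $L_3$ is nothing but $L_2$ with the single extra value $0$ adjoined: $L_3 = \{0\} \cup L_2$, where $L_2 = \{\omega^r + \omega^{-r} \mid 1 \leq r \leq (m-1)/2\}$. Hence $L_3 \cap \mathbb{Z} = \{0\} \cup (L_2 \cap \mathbb{Z})$, and the whole statement becomes a bookkeeping consequence of what is known about $L_2$.

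For the lower bound there is nothing to do: $0$ occurs in $L_3$ by its very definition, so $0 \in L_3 \cap \mathbb{Z}$, i.e. $\{0\} \subseteq L \cap \mathbb{Z}$.

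For the upper bound it suffices to check $L_2 \cap \mathbb{Z} \subseteq \{-1\}$, which is exactly the content of Lemma \ref{j} (that argument only analyses the set $L_2$ and does not really use that the ambient group is cyclic). For completeness I would rerun it: writing $\omega^r + \omega^{-r} = 2\cos(2\pi r/m)$ and noting that for $1 \leq r \leq (m-1)/2$ the angle $2\pi r/m$ lies strictly between $0$ and $\pi$, one sees that this value lies in the open interval $(-2,2)$, so the only integers it could equal are $-1$, $0$, $1$. Then $2\cos(2\pi r/m) = 1$ forces $m = 6r$, $2\cos(2\pi r/m) = 0$ forces $m = 4r$, and $2\cos(2\pi r/m) = -1$ forces $m = 3r$; the first two are impossible since $m$ is odd, so the only integer that can appear among the values $\omega^r + \omega^{-r}$ is $-1$ (occurring precisely when $3 \mid m$). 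Combining, $L \cap \mathbb{Z} = \{0\} \cup (L_2 \cap \mathbb{Z}) \subseteq \{-1,0\}$, together with $\{0\} \subseteq L \cap \mathbb{Z}$ from the previous paragraph, which is the assertion.

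There is essentially no obstacle: the lemma is a routine corollary of the facts already recorded for $L_2$. The only points that deserve a moment of care are the exclusion of the values $\pm 2$ (handled by observing the cosine argument never equals $0$ or $\pi$) and keeping the three divisibility conditions $m = 6r$, $m = 4r$, $m = 3r$ straight against the hypothesis that $m$ is odd.
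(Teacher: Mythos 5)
Your proof is correct and follows essentially the same route as the paper's: both note that $0\in L_3$ by definition and then rule out $\pm 2$ and $1$ (using that $m$ is odd and $r\le (m-1)/2$), leaving $-1$ as the only other possible integer, occurring exactly when $3\mid m$. Your packaging of this as $L_3=\{0\}\cup L_2$ combined with Lemma \ref{j} is a minor organizational variant, and your explicit divisibility checks ($m=6r$, $m=4r$, $m=3r$) simply spell out what the paper leaves as ``we obtain a contradiction under the assumption $m\ge 5$ is odd.''
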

\begin{proof}
Since $ 0\in L_{3} \cap \mathbb{Z} $, it suffices to show that $ -2, 1, 2 \notin L_{3} $. If $ -2, 1 \in L_{3} $,
then we obtain a contradiction under assumption $ m\geq 5 $ is odd. Moreover, $ r\leq (m-1)/2 $ shows that $ 2 \notin L_{3} $.
Note that $ \omega^{r}+\omega^{-r} = -1 $ if and only if $ m/3 $ is an integer. Therefore if $ m/3 $ is integer,
then $ L \cap \mathbb{Z} = \lbrace -1, 0 \rbrace $. \hfill\bull
\end{proof}

\begin{lemma}\label{l}
Let $ G $ be a dihedral or generalized quaternion group of order $ 2m $, $ (m\geq 8 $ even$)$ and $ (G, \chi) $ be $ L $-sharp with $ L = L_{4} $, then $ L \cap \mathbb{Z} =\lbrace -2, 0 \rbrace $ or $ L \cap \mathbb{Z} = \lbrace -2, -1, 0, 1 \rbrace $.
\end{lemma}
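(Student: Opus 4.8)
The plan is to compute $L_{4}\cap\mathbb{Z}$ directly from the definition $L_{4}=\{-2,0\}\cup\{\omega^{r}+\omega^{-r}\mid 1\leq r\leq m/2-1\}$, since this set depends only on $m$. The two explicitly listed values give $\{-2,0\}\subseteq L_{4}\cap\mathbb{Z}$, and, as already noted in the text, $L_{4}\cap\mathbb{Z}\subseteq\{-2,-1,0,1,2\}$, so the whole question reduces to deciding, in terms of $m$, whether $2$, $-1$, $1$ belong to $L_{4}$.

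First I would record that for $1\leq r\leq m/2-1$ the angle $2\pi r/m$ lies strictly between $0$ and $\pi$, so $\omega^{r}+\omega^{-r}=2\cos(2\pi r/m)\in(-2,2)$; in particular $2\notin L_{4}$. Then, using that $\cos$ is injective on $(0,\pi)$, I would solve $2\cos(2\pi r/m)=-1$ and $2\cos(2\pi r/m)=1$ in this range: the unique solutions are $r=m/3$ and $r=m/6$ respectively, so $-1\in L_{4}$ if and only if $3\mid m$, and $1\in L_{4}$ if and only if $6\mid m$ (the range constraints $m/3\leq m/2-1$ and $m/6\leq m/2-1$ hold automatically because $m\geq 8$).

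The one point worth isolating is that, since $m$ is even, $3\mid m$ is equivalent to $6\mid m$; hence $-1\in L_{4}$ if and only if $1\in L_{4}$, so $-1$ and $1$ enter $L_{4}$ simultaneously or not at all. Putting the cases together: if $6\mid m$ then $L_{4}\cap\mathbb{Z}=\{-2,-1,0,1\}$, and if $6\nmid m$ then $L_{4}\cap\mathbb{Z}=\{-2,0\}$, which is precisely the claimed dichotomy. I do not expect any real obstacle here; the hypothesis that $(G,\chi)$ is $L_{4}$-sharp for a dihedral or generalized quaternion group $G$ plays no role beyond guaranteeing that $L_{4}$ has the stated form, and I would retain it only for uniformity with Lemmas \ref{j} and \ref{k}.
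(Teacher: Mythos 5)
Your proof is correct and follows essentially the same route as the paper's: note $-2,0\in L_{4}$, rule out $2$ from the range restriction $r\leq m/2-1$, and observe that $-1$ and $1$ lie in $L_{4}$ exactly when $3\mid m$ (the paper states this as ``easy to see''; you supply the details via $r=m/3$, $r=m/6$, and the parity of $m$). No discrepancies.
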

\begin{proof}
Obviously, $ -2, 0 \in L_{4} $. Since $ r\leq m/2-1 $, we have $ 2 \notin L_{4} $. It is easy to see that $ -1, 1 \in L_{4} $ if and only if $ m/3 $ is an integer. Hence $ L_{4} \cap \mathbb{Z} =\lbrace -2, 0 \rbrace $ or $ L_{4} \cap \mathbb{Z} = \lbrace -2, -1, 0, 1 \rbrace $. \hfill\bull
\end{proof}

\begin{lemma}\label{m}
Let $ G $ be a dihedral or generalized quaternion group of order $ 2m $, $ (m\geq 8 ~ even)$ and $ (G, \chi) $
be $ L $-sharp  with $ L = L_{5} $. Then $ L \cap \mathbb{Z} \in \lbrace \lbrace -1\rbrace, \lbrace -1, 1 \rbrace, \lbrace -1, 0, 2 \rbrace, \lbrace -1, 0, 1, 2 \rbrace \rbrace $.
\end{lemma}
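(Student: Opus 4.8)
The plan is to determine $L_5\cap\mathbb{Z}$ by direct computation. Every element of $L_5$ other than $-1$ has the form $1+\omega^r+\omega^{-r}=1+2\cos(2\pi r/m)$ for some $r$ with $1\le r\le m/2-1$. For such $r$ the angle $2\pi r/m$ lies strictly between $0$ and $\pi$, so $\cos(2\pi r/m)\in(-1,1)$ and hence $1+2\cos(2\pi r/m)\in(-1,3)$; moreover $1+2\cos(2\pi r/m)=-1$ would force $\cos(2\pi r/m)=-1$, i.e.\ $r=m/2$, which is excluded. Thus the only integers that can possibly lie in $L_5$ are $-1,0,1,2$, and $-1\in L_5$ by definition, so it remains to decide which of $0,1,2$ occur.

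For each $n\in\{0,1,2\}$ I would solve $1+2\cos(2\pi r/m)=n$ in the range $1\le r\le m/2-1$; this amounts to $\cos(2\pi r/m)\in\{-1/2,\,0,\,1/2\}$, and since $0<2\pi r/m<\pi$ the only candidates are $2\pi r/m=2\pi/3,\ \pi/2,\ \pi/3$, that is $r=m/3,\ m/4,\ m/6$. Each such $r$ is an admissible index (satisfying $1\le r\le m/2-1$) exactly when it is a positive integer, the inequalities being easy to check using $m\ge 8$. Hence $0\in L_5\iff 3\mid m$, $\ 1\in L_5\iff 4\mid m$, and $\ 2\in L_5\iff 6\mid m$.

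Finally, because $m$ is even, $3\mid m$ is equivalent to $6\mid m$, so $0$ and $2$ belong to $L_5$ simultaneously. A case split according to whether or not $4\mid m$ and whether or not $3\mid m$ then gives the four possibilities $L_5\cap\mathbb{Z}=\{-1\}$, $\{-1,1\}$, $\{-1,0,2\}$, or $\{-1,0,1,2\}$, which is exactly the assertion. The argument is elementary throughout; the only place that needs a moment's care is verifying that the candidate indices $m/3$, $m/4$, $m/6$ really do satisfy $1\le r\le m/2-1$ when they are integers, which is precisely where the hypothesis $m\ge 8$ enters. Note that the distinction between $D_{2m}$ and $Q_{2m}$ plays no role here, since $L_5$ is defined purely in terms of $m$.
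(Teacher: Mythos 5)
Your proof is correct and follows essentially the same route as the paper: a direct computation showing $-1\in L_5$, that the remaining values $1+2\cos(2\pi r/m)$ lie in $(-1,3)$, and that $0,2\in L_5$ iff $3\mid m$ (using that $m$ is even) while $1\in L_5$ iff $4\mid m$. Your write-up is somewhat more detailed than the paper's (which asserts these equivalences without checking that the candidate indices $m/3$, $m/4$, $m/6$ lie in the admissible range), but the underlying argument is the same.
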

\begin{proof}
It follows from the assumption that $ -1 \in L_{5} $ and $ -2 \notin L_{5} $. Since $ m $ is even, we have $ 0, 2 \in L_{5} $
if and only if $ m/3 $ is an integer. Moreover, $ 1\in L_{5} $ if and only if $ r=m/4 $.
Therefore $ L \cap \mathbb{Z} $ is equal to $ \lbrace -1 \rbrace $, $ \lbrace -1, 1 \rbrace $,
$ \lbrace -1, 0, 2 \rbrace $ or $ \lbrace -1, 0, 1 , 2 \rbrace $. \hfill\bull
\end{proof}

\begin{theorem}\label{h}
Suppose $ (G, \chi) $ is $ L $-sharp and normalized such that $ L$ contains at least an
irrational value. If $ L \subseteq \mathbb{R} $ and
$$ L \cap \mathbb{Z} \in \big \lbrace \emptyset, \lbrace -1 \rbrace, \lbrace -1, 1 \rbrace, \lbrace -1, 0, 2 \rbrace, \lbrace -1, 0, 1, 2 \rbrace \big \rbrace, $$ then $ (\chi, \chi)_{G} = 2 $. Otherwise $ (\chi, \chi)_{G} = 1 $.
\end{theorem}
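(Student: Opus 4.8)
The idea is to reduce to the Alvis--Nozawa classification of Theorem~\ref{b} and then to compute $(\chi,\chi)_G$ and $L\cap\mathbb{Z}$ in each of the resulting cases, checking the stated dichotomy case by case. Since $\chi$ is a character, $(\chi,\chi)_G$ is the sum of the squares of the multiplicities of the irreducible constituents of $\chi$, and in every case of Theorem~\ref{b} the character $\chi$ is either irreducible or the sum of two irreducible constituents. Moreover those two constituents are always distinct: in the second subcase of (i) one has $\lambda\neq\overline\lambda$ because $\lambda$ is faithful of order $m\geq 5>2$, and in the second subcase of (iii) one has $\psi\neq\varepsilon$ since their degrees differ. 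Hence $(\chi,\chi)_G\in\{1,2\}$ always, with value $2$ exactly when $\chi$ is reducible, so it suffices to show that $\chi$ is reducible if and only if $L\subseteq\mathbb{R}$ and $L\cap\mathbb{Z}$ lies in the family displayed in the statement.

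For cases (i)--(iii) I would use Lemmas~\ref{c},~\ref{d},~\ref{f},~\ref{g} to identify $L$: the linear subcase of (i) gives $L=L_1$, the subcase $\chi=\lambda+\overline\lambda$ of (i) gives $L=L_2$, case (ii) gives $L=L_3$, and case (iii) gives $L=L_4$ (for $\chi=\psi$) or $L=L_5$ (for $\chi=\psi+\varepsilon$). Then $L_1\not\subseteq\mathbb{R}$ since $m\geq 3$, and there $\chi$ is linear, so $(\chi,\chi)_G=1$, consistent with the ``otherwise'' clause. For $L_2$ and $L_5$, $\chi$ is reducible, so $(\chi,\chi)_G=2$, and Lemmas~\ref{j} and~\ref{m} give that $L\cap\mathbb{Z}$ is $\emptyset$ or $\{-1\}$ for $L_2$, and one of $\{-1\},\{-1,1\},\{-1,0,2\},\{-1,0,1,2\}$ for $L_5$, all in the displayed family. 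For $L_3$ and $L_4$, $\chi$ is irreducible, so $(\chi,\chi)_G=1$, and Lemmas~\ref{k} and~\ref{l} give $L\cap\mathbb{Z}\in\{\{0\},\{-1,0\}\}$, respectively $L\cap\mathbb{Z}\in\{\{-2,0\},\{-2,-1,0,1\}\}$, none of which is in the displayed family --- again the ``otherwise'' case.

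It remains to handle the three exceptional cases (iv)--(vi), where $\chi$ is irreducible and hence $(\chi,\chi)_G=1$; here I would read the sharp type off the character table of each group, obtaining $L=L_6$ for the binary octahedral group, $L=L_7$ for $SL(2,5)$, and $L=L_8$ for $A_5$. Each of these three sets is stable under the Galois action that permutes the (two) Galois-conjugate candidate characters, so the choice among them is immaterial. In all three, $L\subseteq\mathbb{R}$ but $L\cap\mathbb{Z}$ equals $\{-2,-1,0,1\}$ for $L_6$ and $L_7$ and $\{-1,0\}$ for $L_8$, none in the displayed family, so the ``otherwise'' conclusion $(\chi,\chi)_G=1$ holds. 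Assembling all eight subcases, $(\chi,\chi)_G=2$ precisely for $L\in\{L_2,L_5\}$, and these are exactly the instances with $L\subseteq\mathbb{R}$ and $L\cap\mathbb{Z}$ in the displayed family, while $(\chi,\chi)_G=1$ otherwise, which is the claim. The part requiring the most care is the treatment of the binary octahedral group, $SL(2,5)$ and $A_5$: one must check from their character tables that the sharp normalized degree-$2$ (respectively degree-$3$) characters really have types $L_6,L_7,L_8$ and that the integer parts of these sets avoid the displayed family; the rest is routine bookkeeping over Theorem~\ref{b} and the preceding lemmas.
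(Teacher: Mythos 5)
Your proposal is correct and follows essentially the same route as the paper: reduce to the Alvis--Nozawa classification (Theorem~\ref{b}), identify the possible types $L_1,\dots,L_8$ via Lemmas~\ref{c}--\ref{g}, compute $L\cap\mathbb{Z}$ via Lemmas~\ref{j}--\ref{m}, and observe that $(\chi,\chi)_G=2$ exactly in the two reducible cases ($L_2$ and $L_5$), which are precisely those with $L\subseteq\mathbb{R}$ and $L\cap\mathbb{Z}$ in the displayed family. Your added remarks (that the two irreducible constituents are distinct, and the explicit integer parts of $L_6,L_7,L_8$) only make explicit details the paper leaves implicit.
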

\begin{proof}
First suppose that $ L \subseteq \mathbb{R} $ and
$$ L \cap \mathbb{Z} \in \big \lbrace \emptyset, \lbrace -1 \rbrace, \lbrace -1, 1 \rbrace, \lbrace -1, 0, 2 \rbrace , \lbrace -1, 0, 1, 2 \rbrace \big \rbrace.$$
Then it follows from Theorem \ref{b} and Lemmas \ref{j}, \ref{k}, \ref{l}, and \ref{m} that
\nmrt
\tm{1} either $ G $ is a cyclic group of odd order and $ \chi $ is the sum of two complex conjugate linear characters of $ G $;
\tm{2} or $ G $ is a dihedral or generalized quaternion group of order $ 2m $~$ (~m\geq 8~even) $ and $ \chi $ is the sum of an irreducible character of degree $ 2 $ and a linear character with cyclic kernel of order $ m $.
\enmrt
So in each of these two cases we have $ (\chi, \chi)_{G} = 2 $.

Now we consider the other cases as follows.
\nmrt
\tm{i} If $ L \nsubseteq \mathbb{R} $, then by Theorem \ref{b} and Lemmas \ref{c}, \ref{d}, \ref{f}, and \ref{g}, $ G $ is a cyclic group and $ \chi $ is linear.
\tm{ii} If $ L \subseteq \mathbb{R} $ and $ L \cap \mathbb{Z} \not\in \big \lbrace \emptyset, \lbrace -1 \rbrace, \lbrace -1, 1 \rbrace, \lbrace -1, 0, 2 \rbrace , \lbrace -1, 0, 1, 2 \rbrace \big \rbrace $, then by Theorem \ref{b} and Lemmas \ref{j}, \ref{k}, \ref{l}, and \ref{m},
we have one of the following cases:
\begin{itemize}
\item[$ (a) $] $ G $ is dihedral of order $ 2m $, where $ m\geq 5 $ is odd, and $ \chi $ is irreducible of degree $ 2 $.
\item[$ (b) $] $ G $ is dihedral or generalized quaternion of order $ 2m $, where $ m\geq 8 $ is even, and $ \chi $ is irreducible of degree $ 2 $.
\item[$ (c) $] $ G $ is isomorphic to the binary octahedral group and $ \chi $ is irreducible sharp character of degree $ 2 $ and type $ L=L_{6} $.
\item[$ (d) $] $ G $ is isomorphic to $ SL(2, 5) $ and $ \chi $ is irreducible sharp character of degree $ 2 $ and type $ L=L_{7} $.
\item[$ (e) $] $ G $ is isomorphic to $ A_{5} $ and $ \chi $ is irreducible sharp character of degree $ 3 $ and type $ L=L_{8} $.
\end{itemize}
\enmrt
As we see in the cases $(i)$ and $(ii)$ above, the sharp character $\chi$ is irreducible and so we have $ (\chi, \chi)_{G} = 1 $. \hfill\bull
\end{proof}
\vspace{1cm}
Now we are ready to state and prove the main result of this section.
\begin{theorem}\label{i1}
If $ (G,\chi) $ is $ L $-sharp and normalized with $ |L| \geq 2 $ and $ L $ contains at least an irrational value,
then $ (\chi, \chi)_{G} $ is uniquely determined by $ L $.
\end{theorem}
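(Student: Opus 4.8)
The plan is to deduce the statement directly from Theorem~\ref{h}, which already carries essentially all of the content. First I would note that Theorem~\ref{h} applies verbatim to an arbitrary normalized $L$-sharp pair $(G,\chi)$ whose value set $L$ contains an irrational number, and that it forces $(\chi,\chi)_G\in\{1,2\}$, with the alternative decided solely by two features of $L$: whether $L\subseteq\mathbb{R}$, and whether $L\cap\mathbb{Z}$ lies in the list $\{\emptyset,\{-1\},\{-1,1\},\{-1,0,2\},\{-1,0,1,2\}\}$. In the first situation $(\chi,\chi)_G=2$, and otherwise $(\chi,\chi)_G=1$.

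The remaining argument is then purely formal. Both conditions ``$L\subseteq\mathbb{R}$'' and ``$L\cap\mathbb{Z}=S$'', for a prescribed finite set $S$, are properties of the set $L$ alone and make no reference to the group $G$ or to the character $\chi$. Hence the rule supplied by Theorem~\ref{h} assigns to each admissible $L$ a single value in $\{1,2\}$; if $(G,\chi)$ and $(H,\theta)$ are both normalized and $L$-sharp with $L$ containing an irrational value, then feeding the common set $L$ into that rule yields $(\chi,\chi)_G=(\theta,\theta)_H$. This is exactly the assertion that $(\chi,\chi)_G$ is uniquely determined by $L$, so the theorem follows.

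To be thorough I would also insert two short checks. One is that the hypothesis $|L|\ge 2$ is automatic here: if $L=\{\alpha\}$ were a singleton with $\alpha$ irrational, then $Sh(\chi)=\chi(1)-\alpha$ would have to equal the integer $|G|$, which is impossible since $\chi(1)\in\mathbb{Z}$. The other is that the six cases of Theorem~\ref{b} are genuinely exhausted inside Theorem~\ref{h}: cases (i)--(iii) are routed through Lemmas~\ref{c}--\ref{m}, while the sporadic cases (iv), (v), (vi) produce the explicit types $L_6$, $L_7$, $L_8$, each of which has $L\cap\mathbb{Z}$ outside the admissible list (for instance $L_6\cap\mathbb{Z}=\{-2,-1,0,1\}$) and so lands in the branch $(\chi,\chi)_G=1$.

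Accordingly there is no genuine obstacle left at this stage: the difficult ingredients --- the Alvis--Nozawa classification, the determination of $L_i\cap\mathbb{Z}$ for $1\le i\le 5$, and the degree count separating $(\chi,\chi)_G=1$ from $(\chi,\chi)_G=2$ --- were all dispatched in Theorem~\ref{h} and the lemmas that precede it, and what remains is the one-line logical observation that a quantity determined by intrinsic features of $L$ is determined by $L$.
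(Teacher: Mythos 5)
Your proposal is correct and follows essentially the same route as the paper: the paper's own proof of Theorem~\ref{i1} is exactly the observation that Theorem~\ref{h} forces $(\chi,\chi)_G\in\{1,2\}$ with the choice decided by features of $L$ alone. Your additional checks (the singleton case and the exhaustion of the sporadic types $L_6$, $L_7$, $L_8$) are sound but not part of the paper's argument.
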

\begin{proof}
It follows from Theorem \ref{h} that if $ (G,\chi) $ is $ L $-sharp with $ |L| \geq 2 $ and $ L $
contains at least an irrational value, then $  (\chi, \chi)_{G} \in \lbrace 1, 2 \rbrace $ and it can be uniquely determined by $ L $. \hfill\bull
\end{proof}
\begin{remark}
The example below shows that  the pair $ (G,\chi) $ in Theorem \ref{i1} must be normalized as  a necessary condition.
\end{remark}
\begin{example}\label{akh}
{\rm Let $ G $ be the dihedral group $ D_{16} $. Then $ \chi=\chi_{3}+2\chi_1 $ is a sharp character of
type $ \lbrace 2, 0,\sqrt{2}+2,-\sqrt{2}+2 \rbrace $; see Table $10$. On the other hand, $ \theta = \chi_{2}+\chi_{3}+\chi_{1} $
is also a sharp character of type $ \lbrace 2, 0,\sqrt{2}+2,-\sqrt{2}+2 \rbrace $. But we have  $(\chi,\chi)_G \neq (\theta, \theta)_{G}$.}
\begin{table}[!ht]
\centering
\begin{tabular}{c|crcrrrr}
$ |Cl_{G}(g)| $ & $1$ & $4$ & $2$ & $2$ & $1$ & $4$ & $2$ \\
\hline
$ \chi_{1} $ & $1$ & $1$ & $1$ & $1$ & $1$ & $1$ & $1$\\
$ \chi_{2} $ & $1$ & $-1$ & $1$ & $1$ & $1$ & $-1$ & $1$\\
$ \chi_{3} $ & $2$ & $0$ & $\sqrt{2}$ & $0$ & $-2$ & $0$ & $-\sqrt{2}$
\end{tabular}
\caption{Sharp characters $\chi=\chi_3+2\chi_1$ and $ \theta = \chi_{2}+\chi_{3}+\chi_{1} $}
\end{table}
\end{example}

\end{document}